\documentclass[11pt, leqno, twoside]{article}

\usepackage{amssymb}
\usepackage{amsmath}
\usepackage{amsthm}
\usepackage{amsfonts}
\usepackage{mathrsfs}
\usepackage{indentfirst}
\usepackage{graphicx}
\usepackage{txfonts}

\usepackage{anysize}

\usepackage{fancyhdr}

\usepackage{color}
\usepackage{setspace}

\usepackage{hyperref}
\hypersetup{colorlinks=true, linkcolor=blue, citecolor=blue}

\allowdisplaybreaks

\usepackage{txfonts}

\newtheorem{thm}{Theorem}[section]
\newtheorem{prop}[thm]{Proposition}
\newtheorem{lem}[thm]{Lemma}

\theoremstyle{definition}
\newtheorem{defn}[thm]{Definition}

\numberwithin{equation}{section}

\newcommand{\rt}{\right}

\def\hs{\hspace{0.26cm}}
\def\ls{\lesssim}
\begin{document}
	
	\title{\bf\Large A Note on Capacity Muckenhoupt Weights with respect to the Hausdorff Content
		\footnotetext{\hspace{-0.35cm}
			2020 {\it Mathematics Subject Classification}.
			{42B25} \endgraf
			{\it Key words and phrases.} Muckenhoupt weight, Hausdorff measure, Hausdorff dimension}}
\author{Yangzhi Zhang,
Ciqiang Zhuo\footnote{Corresponding author,\ E-mail: cqzhuo87@hunnu.edu.cn/}}
	
	\date{ }
	\maketitle
	
	\vspace{-0.6cm}
	
	\begin{center}
		\begin{minipage}{13cm}
			{\small {\bf Abstract}\quad
				Let $p\in[1,\infty)$ and $\delta\in(0,n]$.
				Recently, to characterize the weighted boundedness of maximal operator on Choquet integrals based on Hausdorff content $\mathcal H_\infty^\delta$, a class of Capacity Muckenhoupt weights is introduced, denoted by $\mathcal A_{p,\delta}$, and,
				for any fixed $\delta\in(0,n]$,
				this new class of weights is proved to satisfy the self-improving property with respect to the index $p\in(1,\infty)$.
				In this note, we show that, for every fixed $p\in[1,\infty)$,
				the class of capacity Muckenhoupt weights $\mathcal A_{p,\delta}$ fails to satisfy the self-improving property with respect to the index $\delta\in(0,n]$.}
		\end{minipage}
	\end{center}
	
	\vspace{0.2cm}

\def\vpz{\vphantom}
\def\rr{{\mathbb R}}
\def\rn{{{\rr}^n}}
\def\rnn{{\rr}^{n+1}_+}
\def\zz{{\mathbb Z}}
\def\cc{{\mathbb C}}
\def\nn{{\mathbb N}}
\def\cp{{\mathcal P}}
\def\cq{{\mathcal Q}}
\def\cf{{\mathcal F}}
\def\cl{{\mathcal L}}
\def\cm{{\mathcal M}}
\def\cn{{\mathcal N}}
\def\mj{{\mathrm J}}
\def\ca{{\mathcal A}}
\def\cd{{\mathcal D}}
\def\cb{{\mathcal B}}
\def\ch{{\mathcal H}}
\def\ck{{\mathcal K}}

\def\cha{{\mathbf{1}}}

\def\fz{\infty}
\def\az{\alpha}
\def\bz{\beta}
\def\dz{\delta}
\def\bdz{\Delta}
\def\ez{\epsilon}
\def\gz{{\gamma}}
\def\bgz{{\Gamma}}
\def\kz{\kappa}
\def\lz{\lambda}
\def\blz{\Lambda}
\def\oz{{\omega}}
\def\boz{{\Omega}}
\def\tz{\theta}
\def\sz{\sigma}

\def\bfai{\Phi}
\def\ccr{{\mathcal R}}
\def\epz{\epsilon}
\def\uz{\upsilon}
\def\rz{\rho}
\def\xz{\xi}
\def\zez{\zeta}
\def\vz{\varphi}
\def\uc{{\varepsilon}}

\def\lf{\left}
\def\rt{\right}
\def\lfz{{\lfloor}}
\def\rfz{{\rfloor}}
\def\la{\langle}
\def\ra{\rangle}
\def\hs{\hspace{0.26cm}}
\def\hsz{\hspace{-0.26cm}}
\def\ls{\lesssim}
\def\gs{\gtrsim}
\def\tr{\triangle}
\def\pa{\partial}
\def\ov{\overline}
\def\noz{\nonumber}
\def\wz{\widetilde}
\def\wh{\widehat}
\def\st{\subset}
\def\com{\complement}
\def\bh{\backslash}
\def\btd{\bigtriangledown}

\def\cs{{\mathcal S}}
\def\cx{{\mathcal X}}

\def\gfz{\genfrac{}{}{0pt}{}}
\def\dyt{\,\frac{dy\,dt}{t}}
\def\dxt{\,\frac{dx\,dt}{t}}
\def\dxtn{\,\frac{dx\,dt}{t^{n+1}}}
\def\dt{\,\frac{dt}{t}}
\def\dytn{\,\frac{dy\,dt}{t^{n+1}}}
\def\dtn{\,\frac{dt}{t^{n+1}}}

\def\dis{\displaystyle}
\def\dlimsup{\displaystyle\limsup}
\def\dist{\mathop\mathrm{\,dist\,}}
\def\supp{\mathop\mathrm{\,supp\,}}
\def\atom{\mathop\mathrm{\,atom\,}}
\def\loc{{\mathop\mathrm{\,loc\,}}}
\def\esinf{\mathop\mathrm{\,ess\,inf\,}}
\def\esup{\mathop\mathrm{\,ess\,sup\,}}
\def\fin{{\mathop\mathrm{fin}}}
\def\Lip{{\mathop\mathrm{\,Lip\,}}}

\def\aa{{\mathbb A}}
\def\q1{\wz q}
\def\Q1{q_1}
\def\vlp{{L^{p(\cdot)}(\cx)}}
\def\vlq{{L^{q(\cdot)}(\cx)}}
\def\vlpq{{L^{p(\cdot)}(Q)}}
\def\vcm{{\mathcal L_{p(\cdot),\phi,d}(\rn)}}
\def\aaa{{\ca_{p(\cdot)}^s(\cx)}}
\def\bbb{{\cb_{p(\cdot)}^s(\cx)}}
\def\www{{\cw_{p(\cdot)}^s(\cx)}}

\def\dis{\displaystyle}
\def\dsum{\displaystyle\sum}
\def\dint{\displaystyle\int}
\def\dfrac{\displaystyle\frac}
\def\dsup{\displaystyle\sup}
\def\dinf{\displaystyle\inf}
\def\dlim{\displaystyle\lim}

\def\diam{{\mathop\mathrm{\,diam\,}}}
\def\dist{{\mathop\mathrm{\,dist\,}}}
\def\loc{{\mathop\mathrm{loc\,}}}
\def\lfz{\lfloor}
\def\rfz{\rfloor}
\def\wee{\wedge}

\section{Introduction}
Let $\dz \in (0,n]$ and $\ch_\fz^\delta$ be
the \emph{Hausdorff content} (or \emph{$\delta$-dimensional Hausdorff capacity}) of a subset $E\subset \rn$ (see Section 2
for the definition).
For any subset $E\subset \rn$ and any non-negative function $f$ defined on $E$,
the \emph{Choquet integral} of $f$ on $E$ is defined by setting
\[\int_{E}f(x)\,d\ch_{\fz}^{\dz}:=\int_{0}^{\fz}\ch_{\fz}^{\dz}(\{x\in E:\ f(x)>t\})\,dt.\]
For any $p\in(0,\fz)$, the \emph{$p$-Choquet
	integral with respect to the capacity} of a function $f$ on $E$ is defined by setting
\begin{align*}
	\|f\|_{L^p(E,\ch_{\fz}^{\dz})}:=\lf[\int_E |f(x)|^p\,d\ch_{\fz}^{\dz}\rt]^\frac1p.
\end{align*}
For any $p\in(0,\fz)$ and $E\subset \rn$, we always use $L^p(E,\ch_{\fz}^{\dz})$ to denote
the space of all functions $f$ such that the quasi-norm $\|f\|_{L^p(E,\ch_{\fz}^{\dz})}$ is finite, and denote by $L_{\loc}^1(\rn,\ch_{\fz}^{\dz})$ the set of all functions $f$ satisfying that $f\in L^1(K,\ch_{\fz}^{\dz})$ for any compact set $K\subset\rn$. For the further theory of Choquet integrals with respect to Hausdorff content, we refer the reader to \cite{0923-2,Ada15,OrVe98,Ta11}.

Recently, inspired by the recent extensive research on capacity (see, for example, \cite{chyz26,flz25,JLWX26}), Huang et al. \cite{0702-4} introduced a class of capacitary Muckenhoupt weight with respect to the Hausdorff content.
Precisely, for $p\in[1,\fz)$, the \emph{capacity Muckenhoupt weight} $\ca_{p,\dz}$ is defined to be the set of
all $\ch_\fz^\delta$-capacity weight on $\rn$ such that $[w]_{\mathcal A_{p,\delta}}<\infty$,
where for $p\in(1,\fz)$,
\begin{align*}
	[w]_{\mathcal A_{p,\delta}}:=\sup_{\,Q\,\subset \rn}\lf\{\frac{1}{\mathcal H^{\delta}_\infty(Q)}\int_Qw(x)\,d\mathcal H^{\delta}_\infty\rt\}
	\lf\{\frac{1}{\mathcal H^{\delta}_\infty(Q)}\int_Qw(x)^{-\frac{1}{p-1}}\,d\mathcal H^{\delta}_\infty\rt\}^{p-1}
\end{align*}
with the supremum being taken over all cubes $Q$ of $\rn$, and for $p=1$,
\begin{align*}
	[w]_{\ca_{1,\delta}}:=\inf\lf\{K:\ \cm_{\ch_\fz^\delta}w\leq K w\ {\rm for}\ \ch_\fz^\delta{\rm-almost\ everywhere}\rt\}.
\end{align*}
Here, a function $w$ is called a $\ch_\fz^\delta$-\emph{capacity weight} on $\rn$, we mean that
$w\in L_{\loc}^1(\rn,\ch_{\fz}^{\dz})$ and $w\in(0,\fz)$ almost everywhere on $\rn$
with respect to the Hausdorff content $\ch_\fz^\delta$, and \(\mathcal M_{\mathcal H_{\infty}^\delta}\) stands for the Hardy-Littlewood maximal operator associated with the Hausdorff content defined by setting
$$\mathcal M_{{\mathcal H}_\infty^\delta}f(x):=\sup_{Q\ni x}\frac1{{\mathcal H}_\infty^\delta(Q)}\int_Q |f(y)|\,d{\mathcal H}_\infty^\delta,$$
with the supremum ranging over all cubes \(Q\) containing \(x\).

When $\delta=n$, the capacity Muckenhoupt weight $\mathcal A_{p,n}$, restricted on all Lebesgue measurable subsets
of $\rn$, is equivalent to the classical Muckenhoupt weight $A_p$
introduced in \cite{Muc72} and have several interesting applications.
For example, such capacity Muckenhoupt weight gives a sufficiency and necessity for the boundedness of the maximal operator
\(\mathcal M_{\mathcal H_{\infty}^\delta}\) on $L^p(\ch_\fz^\delta)$ (see \cite[Theorems 1.1 and 1.2]{0702-4}).
As an analogue of the classical BMO space, functions with $\delta$-dimensional bounded mean oscillation
also enjoy an equivalent characterization in terms of the capacity Muckenhoupt weight (see \cite{HZZ26}).

Besides the above interesting application, Huang et al. in \cite{0702-4} further established
several properties of the class of capacity Muckenhoupt weights $\ca_{p,\delta}$, such as the reverse H\"older inequality,
the Jones factorization theorem, and monotonicity with respect to $p$.
Particularly, for a given $\delta\in(0,n]$, $\ca_{p,\delta}$ has the self-improving property with respect to the index $p$,
namely, if $w\in\mathcal{A}_{p,\delta}$ with $p\in(1,\fz)$, then there exists $q\in(1,p)$ such that $w\in\mathcal{A}_{q,\delta}$.
On the other hand, by \cite[Proposition 2.15]{0702-4}, we know that, for any fixed $p\in[1,\fz)$, if $0<\delta<\beta\le n$,
then $\mathcal{A}_{p,\delta}\subsetneqq \mathcal{A}_{p,\beta}$.
Therefore, a natural interesting question arises:

	Does the class of capacity Muckenhoupt weights
	have the self-improving property with respect to the dimension $\delta$? That is, for any given $p\in[1,\infty)$ and $w\in\mathcal{A}_{p,\delta}$ with $\delta\in(0,n]$, does there exist $\delta_1\in(0,\delta)$ such that $w\in\mathcal{A}_{p,\delta_1}?$

The main result of this note gives a negative answer to the above question.

\begin{thm}\label{lm-1226-x}
	Let $\dz\in(0, n]$ and $p\in[1,\fz)$. Then there exists a capacity weight \(w\) such that \(w\in\mathcal{A}_{p,\delta}\) but \(w\notin\mathcal{A}_{p,\beta}\) for any \(\beta\in(0,\delta)\), namely, the capacity weights \(\mathcal{A}_{p,\delta}\) do not have the self-improving property with respect to the index \(\delta\).
\end{thm}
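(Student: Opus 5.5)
The idea is to avoid any delicate weight estimates. Since the $\ca_{p,\dz}$ condition is tested only through Choquet integrals against $\ch_\fz^\dz$, and through ``$\ch_\fz^\dz$-almost everywhere'' statements, it cannot see sets of zero $\ch_\fz^\dz$-content. Such sets can still have positive $\ch_\fz^\beta$-content for every $\beta<\dz$. So I will take $w$ equal to $1$ except on a set $E$ that is $\ch_\fz^\dz$-null but has positive $\ch_\fz^\beta$-content for all $\beta\in(0,\dz)$, and set $w=\fz$ on $E$.

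\textbf{Step 1 (the exceptional set).} Choose $\beta_k\uparrow\dz$ with $\beta_k\in(0,\dz)$. For each $k$ take a compact set $E_k\subset[0,1]^n$, for instance a self-similar Cantor-type set, of Hausdorff dimension exactly $\beta_k$. Put $E:=\bigcup_k E_k$.
\begin{itemize}
\item Since $\beta_k<\dz$, we have $\ch^\dz(E_k)=0$, hence $\ch_\fz^\dz(E_k)=0$. Countable subadditivity of the Hausdorff content gives $\ch_\fz^\dz(E)=0$.
\item Given $\beta<\dz$, pick $k$ with $\beta_k>\beta$. Then $\ch^\beta(E_k)=\fz$, so $\ch_\fz^\beta(E_k)>0$ (content and measure have the same null sets). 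By monotonicity, $\ch_\fz^\beta(E)>0$.
\end{itemize}
When $\dz=n$ this works equally well: sets of dimension $\beta_k<n$ have zero $\ch_\fz^n$-content. This step relies only on standard facts on Hausdorff content recalled in Section 2.

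\textbf{Step 2 ($w\in\ca_{p,\dz}$).} Define $w:=1$ on $\rn\setminus E$ and $w:=\fz$ on $E$.
\begin{itemize}
\item Since $\ch_\fz^\dz(E)=0$, we have $w\in(0,\fz)$ $\ch_\fz^\dz$-a.e.
\item For every cube $Q$ and $t>0$, the level sets $\{x\in Q: w(x)>t\}$ and $\{x\in Q: 1>t\}$ differ by a subset of $E$. By subadditivity and monotonicity of $\ch_\fz^\dz$ they have the same content.
\item Hence $\int_Q w\,d\ch_\fz^\dz=\ch_\fz^\dz(Q)$, and likewise $\int_Q w^{-1/(p-1)}\,d\ch_\fz^\dz=\ch_\fz^\dz(Q)$, using $w^{-1/(p-1)}=0$ on $E$ and $1$ elsewhere.
\end{itemize}
So $w\in L^1_{\loc}(\rn,\ch_\fz^\dz)$ and $[w]_{\ca_{p,\dz}}=1$ for $p>1$. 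For $p=1$, the same computation gives $\cm_{\ch_\fz^\dz}w\equiv1=w$ off $E$, hence $[w]_{\ca_{1,\dz}}=1$.

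\textbf{Step 3 ($w\notin\ca_{p,\beta}$ for $\beta<\dz$).} By Step 1, $\ch_\fz^\beta(\{w=\fz\})=\ch_\fz^\beta(E)>0$. Thus $w$ is not finite $\ch_\fz^\beta$-almost everywhere, so it is not an $\ch_\fz^\beta$-capacity weight. For a more quantitative failure: for any cube $Q\supset[0,1]^n$, every level set $\{x\in Q: w(x)>t\}$ contains $E$. Therefore $\int_Q w\,d\ch_\fz^\beta\ge\int_0^\fz\ch_\fz^\beta(E)\,dt=\fz$, which gives $w\notin L^1_{\loc}(\rn,\ch_\fz^\beta)$.

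\textbf{Main obstacle.} The only step needing care is Step 1: producing a single set that is null for $\ch_\fz^\dz$ yet non-null for every $\ch_\fz^\beta$ with $\beta<\dz$. Any single set of dimension $s<\dz$ fails for $\beta\in(s,\dz)$, which is why the countable union with $\beta_k\uparrow\dz$ is needed. The facts used are that $\ch_\fz^\dz$ is countably subadditive, and that $\ch_\fz^\beta(F)=0$ if and only if $\ch^\beta(F)=0$.

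I first tried a pointwise power-type singularity $|x|^{-\alpha}$, possibly with a logarithmic correction. That route does not work: its $\ca_{p,\dz}$ constant behaves like $(\dz-\alpha)^{-1}$, and the logarithmically corrected critical weight fails the $\ca_{p,\dz}$ condition at small scales. This is why I use the null-set construction instead.
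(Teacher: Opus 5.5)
Your proof is correct. Steps 2--3 are essentially the paper's argument. The paper puts $w=0$ on $E$ and shows $\int_{Q_0}w^{-1/(p-1)}\,d\mathcal{H}^\beta_\infty=\infty$; you put $w=\infty$ on $E$ and show $\int_Q w\,d\mathcal{H}^\beta_\infty=\infty$. In both versions the underlying reason is the same: $w$ is not an $\mathcal{H}^\beta_\infty$-capacity weight, because $\mathcal{H}^\beta_\infty(E)>0$.

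The genuine difference is how the exceptional set is produced. The paper builds a single generalized Cantor set with scales $L_k=2^{-k/\delta-k/\ln(k+1)}$. It gets $\mathcal{H}^\delta(E)=0$ from the natural covers. It then gets $\dim_{\mathcal H}(E)\ge s$ for every $s<\delta$ through the mass distribution principle and a scale-comparison argument, and it writes out only the case $n=1$.

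You instead use countable stability. A union of sets of dimensions $\beta_k\uparrow\delta$ is $\mathcal{H}^\delta$-null by countable subadditivity, yet has positive $\mathcal{H}^\beta_\infty$-content for every $\beta<\delta$.

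Your route is much shorter and works verbatim in every $\mathbb{R}^n$, including $\delta=n$. It needs only the existence of sets of each prescribed dimension below $\delta$ (for example $C_\lambda^n$ with $\lambda<1/2$). It also gives Proposition~\ref{them1229-1} for free, since $\dim_{\mathcal H}E=\sup_k\beta_k=\delta$ and $\mathcal{H}^\delta(E)=0$. So you also undercut the paper's remark that no specific example of this phenomenon seems to be available.

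What the paper's construction buys is a single compact Cantor set sitting exactly at the critical dimension. Your set is only $\sigma$-compact, though you could make it compact by placing the $E_k$ in disjoint cubes shrinking to a point and adding that point. For Theorem~\ref{lm-1226-x} itself, this extra structure is not needed.
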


The proof of Theorem \ref{lm-1226-x} is constructive, which relies on the critical properties of Hausdorff dimension.
For any set $E\subset\rn$, the Hausdorff dimension ${\rm dim}_\ch(E)$ and the $s$-dimensional Hausdorff measure $\ch^s(E)$ satisfy
\[
\ch^s(E)=\lf\{
\begin{array}{ll}
	\fz,&\quad 0\le s<{\rm dim}_\ch(E),\\
	0,&\quad s>{\rm dim}_\ch(E),
\end{array}
\rt.
\]
see \cite[(3.11)]{f14}. Here, for any subset $E$, $s={\rm dim}_\ch(E)$ is the critical value
for the Hausdorff measure $\ch^s(E)$ from $\infty$ to $0$.
Moreover, it was pointed out in \cite{f14}
that the Hausdorff measure at the critical value would be zero, a finite positive number or infinity.
However, it seems that there is no specific example satisfying
\begin{equation}\label{eq0317-b}
	{\rm dim}_\ch(E)=s \quad {\rm and}\quad \ch^s(E)=0.
\end{equation}
In this note, we construct a specific subset $E\subset\rn$ satisfying \eqref{eq0317-b}, through a method from the iterative construction of generalized Cantor sets (see \cite{f14}),
by adjusting the sparsity of the Cantor set and the decay rate of the interval lengths.
Using this example, we obtain a weight $w$ satisfying the requirement of Theorem \ref{lm-1226-x}.

\section{Proof of Theorem \ref{lm-1226-x}} \label{s2}

We first recall the definitions of Hausdorff measure and Hausdorff dimension (see \cite{eg92,f14}).

\begin{defn}
	Let $\delta\in(0,n]$. For any $\epsilon\in(0,\fz]$ and any subset $E\subset\mathbb{R}^n$,
	the \emph{$\delta$-dimensional $\epsilon$-Hausdorff outer measure} of $E$ is defined by setting
	\begin{align*}
		\mathcal{H}_{\epsilon}^{\delta }\lf (E \rt )
		:=\inf\lf \{\sum_i [l(Q_i)]^{\delta }:\
		E\subset \bigcup_i Q_i, l(Q_i)<\epsilon \right \},
	\end{align*}
	where the infimum is taken over all finite or countable cubes from $\rn$ coverings
	$\{Q_i\}_{i}$ of $E$ and $l(Q)$ denotes the edge length
	of the cube $Q$.
	
	Particularly, $\ch_\fz^\delta(E)$ is called the \emph{Hausdorff content} (or \emph{$\delta$-dimensional Hausdorff capacity})
	of $E$.
	Furthermore, the \emph{$\delta$-dimensional Hausdorff measure} of $E$ is defined as
	\begin{equation}\label{eq0319-a}
		\mathcal{H}^{\delta}(E) := \lim_{\epsilon\to 0^+} \mathcal{H}_\epsilon^{\delta}(E).
	\end{equation}
\end{defn}

We remark that the limit in \eqref{eq0319-a} exists for any subset $E\subset \rn$, although the limiting value can be (and
usually is) zero or infinity. Moreover,
$$\mathcal{H}^{\delta}(E) = \sup_{\epsilon> 0} \mathcal{H}_\epsilon^{\delta}(E).$$

\begin{defn}
	Let $E\subset\mathbb{R}^n$ with $E\neq\emptyset$. Then the \emph{Hausdorff dimension} of $E$ is defined by
	\[
	\dim_{\mathcal{H}} (E) := \inf\left\{s\ge0: \mathcal{H}^{s}(E) = 0 \right\}
	\]
	In particular, the Hausdorff dimension of the empty set is defined to be $0$.
\end{defn}

We point out that, for any set $E$, $\dim_{\mathcal{H}} (E)$ is also the critical value of $s$ at which $\mathcal H^s(E)$ ``jumps" from $\infty$ to $0$ (see \cite[p.\,48]{f14}).
Alternative, when $s<\dim_{\mathcal{H}} (E)$, $\ch^s(E)=\fz$ and, when $s>\dim_{\mathcal{H}} (E)$, $\ch^s(E)=0$.
Thus,
$$\dim_{\mathcal{H}} (E)= \sup\left\{ \delta\ge0: \mathcal{H}^{\delta}(E) = +\infty \right\}.$$
When $\dim_{\mathcal{H}} (E)=s$, $\ch^s(E)$ may be zero, a finite positive number or infinity.
However, it seems that there is no specific example for the case ``zero" or ``infinity" to our knowledge.
The following conclusion is the main step for proof of Theorem \ref{lm-1226-x}, which confirms the possibility that,
for any $s\in(0,n]$, when $\dim_{\mathcal{H}} (E)=s$, $\ch^s(E)=0$.

\begin{prop}\label{them1229-1}
	Let $\delta\in(0,n]$. Then there exists a subset $E\subset\mathbb{R}^n$ such that
	\[
	\dim_{\mathcal{H}} (E)=\delta \quad \text{and} \quad \mathcal{H}^\delta(E) = 0.
	\]
\end{prop}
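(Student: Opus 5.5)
Two constructions are available; I will take the simpler one as primary and keep the other as a backup.

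\emph{Primary route: a countable union.} For each $k\in\nn$, choose a compact self-similar set $E_k\subset\rn$ with $\dim_{\mathcal H}(E_k)=s_k:=\delta(1-1/(k+1))<\delta$. For $n=1$, a generalized Cantor set with $m$ pieces of ratio $r$, where $m r^{s_k}=1$, works; for general $n$, take products or $n$-dimensional Cantor dusts. The classical result in \cite{f14} gives $0<\mathcal H^{s_k}(E_k)<\fz$. Put $E:=\bigcup_k E_k$.

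Since $\dim_{\mathcal H}$ is countably stable, $\dim_{\mathcal H}(E)=\sup_k s_k=\delta$. Because $\mathcal H^{s_k}(E_k)<\fz$ and $\delta>s_k$, we get $\mathcal H^\delta(E_k)=0$ for every $k$. Countable subadditivity of the outer measure $\mathcal H^\delta$ then gives $\mathcal H^\delta(E)=0$.

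The case $\delta=n$ is included, since $s_k<n$ and such Cantor sets exist in $\rn$. The only inputs are the standard facts that self-similar sets satisfying the open set condition have positive finite measure in their dimension, and that $\dim_{\mathcal H}$ is countably stable; I will cite both from \cite{f14}.

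\emph{Backup route: a single Cantor set.} This is in the spirit the paper announces. Build $E=\bigcap_j E_j$ iteratively. Each cube of generation $j-1$ contains $N_j$ subcubes of side $\ell_j$, chosen so that $N_1\cdots N_j\,\ell_j^\delta\to0$ while $\log(N_1\cdots N_j)/(-\log \ell_j)\to\delta$. A concrete choice is $N_1\cdots N_j\,\ell_j^{\delta}=1/j$.

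The upper bound comes from covering $E$ by the generation-$j$ cubes. This gives $\mathcal H^\delta_{\ell_j}(E)\le N_1\cdots N_j\,\ell_j^\delta\to0$, hence $\mathcal H^\delta(E)=0$ and $\dim_{\mathcal H}(E)\le\delta$.

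The lower bound $\dim_{\mathcal H}(E)\ge s$ for every $s<\delta$ requires a mass distribution principle. Put the natural probability measure $\mu$ on $E$ and show that $\mu(Q)\lesssim l(Q)^s$ for every cube $Q$. This is the main obstacle: it requires the ratios $N_j$ to not grow too wildly, so that cubes of intermediate size between $\ell_{j}$ and $\ell_{j-1}$ are controlled.

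Because of this difficulty I prefer the first route, which avoids that estimate entirely.
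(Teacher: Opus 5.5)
Your primary route is correct, and it is genuinely different from the paper's proof.

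The paper builds a single Cantor set in $[0,1]$; only $n=1$ is written out, and general $n$ is left as an extension. It uses two pieces per step with scales $L_k=2^{-k/\delta-k/\ln(k+1)}$.
\begin{itemize}
\item The logarithmic correction gives $2^kL_k^\delta=2^{-k\delta/\ln(k+1)}\to0$. Covering by the level-$k$ intervals then yields $\mathcal H^\delta(E)=0$ and $\dim_{\mathcal H}(E)\le\delta$.
\item The correction is too weak to spoil $\mu(I_{k,i})=2^{-k}\le K L_k^s$ for $s<\delta$. The lower bound on the dimension then comes from the mass distribution principle.
\item Intermediate scales are handled by the bounded ratio $L_k/L_{k+1}\le c_0$. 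A set of diameter below $L_{k}$ meets at most two level-$k$ intervals.
\end{itemize}
So the paper carries out exactly your backup route with $N_j\equiv 2$. The intermediate-scale obstacle you flag disappears once $N_j$ is bounded and consecutive scales have bounded ratio.

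Your union argument avoids any hand-made mass distribution estimate.
\begin{itemize}
\item The lower bound $\dim_{\mathcal H}(E)\ge s_k$ for every $k$ is taken from the textbook theorem on self-similar sets with the open set condition, together with monotonicity of dimension.
\item $\mathcal H^\delta(E)=0$ follows from $\mathcal H^{s_k}(E_k)<\infty$, the jump at the critical exponent, and countable subadditivity.
\item The paper's cube-based $\mathcal H^\delta$ is comparable to Falconer's diameter-based one, so the cited facts transfer.
\end{itemize}
This is shorter and treats all $n$ and all $\delta\in(0,n]$, including $\delta=n$, at once. For example, use $2^n$ corner cubes of ratio $2^{-n/s_k}<1/2$. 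It also shows that the phenomenon the paper calls unexampled is elementary.

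What the paper's construction buys is a single compact, homogeneous Cantor set. Every portion $E\cap I_{k,i}$ still has dimension $\delta$, and the vanishing of $\mathcal H^\delta$ is forced by the scale sequence rather than by stacking strictly lower-dimensional pieces. In your example each $E_k$ has dimension $<\delta$, and the union need not be closed.

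For the application in Theorem~\ref{lm-1226-x} this difference is irrelevant. Only $\mathcal H^\delta(E)=0$, $\mathcal H^\beta(E)>0$ for $\beta<\delta$, and $E\subset Q_0$ are used, and you can place every $E_k$ inside $[0,1]^n$.
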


To prove Proposition \ref{them1229-1}, we need the following mass distribution principle (see \cite[p.\,67]{f14}).
Let $\mu$ be a measure on $\mathbb{R}^n$. The smallest closed set $F\subset \rn$ satisfying $\mu(\mathbb{R}^n\backslash F)=0$ is called the support of $\mu$, denoted by ${\rm{spt}}\,\mu$. Moreover, for any $F$ a subset of $\mathbb{R}^n$,
if ${\rm{spt}}\ \mu\subset F$ and $0<\mu(F)<\infty$, then $\mu$ is called a mass distribution on $F$.

\begin{lem}[Mass Distribution Principle]\label{them1230-1}
	Let $s\in(0,\fz)$, $F$ be a subset of $\mathbb{R}^n$ and $\mu$ a mass distribution on $F$.
	If there exist positive constants $K$ and
	$\epsilon$ such that, for every set $U\subset \rn$ with $\text{diam}\ U\leq \epsilon$,
	\[
	\mu(U)\leq K(\text{diam}\ U)^s,
	\]
	then
	\[
	\mathcal{H}^{s}(F)\ge \frac{\mu(F)}{K} \quad \text{and} \quad \dim_{\mathcal{H}} (F)\ge s.
	\]
\end{lem}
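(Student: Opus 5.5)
The plan is the standard covering argument: compare the $\mu$-mass of $F$ with the $s$-dimensional sums over any fine cover of $F$, then take the infimum over covers and let the mesh go to zero. Throughout I regard $\mu$ as a Borel-regular outer measure on $\rn$, as in \cite{f14}, so that it is monotone and countably subadditive on arbitrary sets.

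\emph{Step 1 (covers by general sets).} I first use the diameter convention of \cite[p.\,67]{f14}, in which $\ch^s_{\eta}$ is defined through covers by arbitrary sets $U_i$ with $\diam U_i<\eta$.

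Fix $\eta\in(0,\epsilon]$ and let $\{U_i\}_i$ be any finite or countable cover of $F$ with $\diam U_i<\eta$. If measurability of the $U_i$ is a concern, I replace each $U_i$ by its closure, which has the same diameter. Since $\mu$ is a mass distribution on $F$, we have $\mu(\rn\setminus F)=0$ and $0<\mu(F)<\fz$. Monotonicity, countable subadditivity and the hypothesis (applicable because $\diam U_i\le\epsilon$) then give
$$\mu(F)\le\mu\Big(\bigcup_i U_i\Big)\le\sum_i\mu(U_i)\le K\sum_i(\diam U_i)^s.$$
Taking the infimum over all such covers yields $\ch^s_\eta(F)\ge \mu(F)/K$ for every $\eta\le\epsilon$. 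Letting $\eta\to0^+$ gives $\ch^s(F)\ge\mu(F)/K$.

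\emph{Step 2 (covers by cubes).} In the cube convention of the Definition above, a cube $Q$ with $l(Q)<\eta$ has $\diam Q=\sqrt n\,l(Q)$. For $\eta\le\epsilon/\sqrt n$ the hypothesis therefore gives
$$\mu(Q)\le K n^{s/2}\,l(Q)^s .$$
Repeating Step 1 with covers by cubes, the same chain of inequalities yields $\ch^s(F)\ge\mu(F)/(Kn^{s/2})$. This is the stated bound up to a harmless dimensional constant. That constant is irrelevant for the later use of Lemma \ref{them1230-1}, which only needs $\ch^s(F)>0$. I expect this bookkeeping between the two conventions, together with the measurability caveat in Step 1, to be the only real obstacle; the estimate itself is routine.

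\emph{Step 3 (dimension).} Since $\mu(F)>0$, Steps 1 and 2 give $\ch^s(F)>0$. Suppose, for contradiction, that $t<s$ and $\ch^t(F)=0$. Any cover with mesh less than $\eta$ satisfies
$$\sum_i l(Q_i)^s\le \eta^{s-t}\sum_i l(Q_i)^t,$$
so $\ch^s_\eta(F)\le\eta^{s-t}\ch^t_\eta(F)\le \eta^{s-t}\ch^t(F)=0$. Letting $\eta\to0^+$ gives $\ch^s(F)=0$, a contradiction. Hence $\ch^t(F)\neq0$ for every $t<s$, so $\inf\{t\ge 0:\ch^t(F)=0\}\ge s$, that is, $\dim_\ch(F)\ge s$.
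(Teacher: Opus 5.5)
Your argument is correct, and it is the standard covering proof from \cite[p.\,67]{f14}; the paper only cites that result and gives no proof of its own, so there is no different route to compare. Your Step 2 caveat is also accurate and not merely cosmetic: with the paper's cube-based $\ch^s$, the exact bound $\mu(F)/K$ can fail for $n\ge2$ (Lebesgue measure on $[0,1]^2$ with the isodiametric constant $K=\pi/4$ gives $\mu(F)/K=4/\pi>1=\ch^2([0,1]^2)$), so a factor such as your $n^{s/2}$ is needed, and it is harmless because the paper only uses $n=1$ and $\ch^s(F)>0$.
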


We are now ready to prove Proposition \ref{them1229-1}.

\begin{proof}[Proof of Proposition \ref{them1229-1}]
	
	We only give the proof of the case $n=1$ and $\delta\in(0,1]$,
	since it is not difficulty to extend to the general $n$.
	
	\textbf{Step 1: The Construction of $E$}. The subset $E$ is constructed by induction.
	For the initial step $(k=0)$, let $I_{0,1}=[0,1]$.
	For \(k\in\nn\), we take operation on closed intervals $I_{k-1,i}$ $(i=1,2,\dots,N_{k-1})$ from the $(k-1)$-th iteration as follows, where $N_k$ denotes the number of all intervals in the $k$-step ($N_0=1$).
	\begin{enumerate}
		\item[(i)] Select two closed subintervals at the endpoints of the closed interval \(I_{k-1,i}\) with length
		\[
		L_k := 2^{-\frac{k}{\delta} - \frac{k}{\ln (k+1)}},
		\]
		and remove the middle unselected part;
		\item[(ii)] Denote all selected closed intervals in the \(k\)-th iteration by \(\{I_{k,i}\}_{i=1}^{N_k}\), where the number of intervals satisfies
		\[
		N_k = 2\cdot N_{k-1} = 2^k.
		\]
	\end{enumerate}
	Let \(E_k := \bigcup_{i=1}^{N_k} I_{k,i}\). Then the sparse generalized Cantor set is defined as the intersection of the retained intervals from all iterations:
	\[
	E := \bigcap_{k=1}^{\infty} E_k.
	\]
	By the nested closed intervals theorem, \(E\neq\emptyset\) and \(E\subset[0,1]\subset\mathbb{R}\).
	
	\textbf{Step 2: Verification for \(\mathcal{H}^\delta(E) = 0\)}.
	For any \(k\ge1\), let \(\epsilon_k := L_k\) (the diameter of the intervals at the \(k\)-th iteration). Then \(\{I_{k,i}\}_{i=1}^{N_k}\) forms an \(\epsilon_k\)-cover of \(E\) and hence
	\[
	\mathcal{H}_{\epsilon_k}^\delta(E) \le \sum_{i=1}^{N_k} (\text{diam } I_{k,i})^\delta
	= 2^k \cdot \left( 2^{-\frac{k}{\delta} - \frac{k}{\ln (k+1)}} \right)^\delta
	= 2^{-\frac{k\delta}{\ln (k+1)}}.
	\]
	As \(k\to\infty\), \(\epsilon_k = L_k \to 0^+\) and \(\frac{k\delta}{\ln (k+1)} \to +\infty\). Therefore,
	\[
	\mathcal{H}^\delta(E) = \lim_{k\to\infty} \mathcal{H}_{\epsilon_k}^\delta(E)
	\le \lim_{k\to\infty} 2^{-\frac{k\delta}{\ln (k+1)}} = 0,
	\]
	which implies \(\mathcal{H}^\delta(E) = 0\).
	
	\textbf{Step 3: Proof for \(\dim_{\mathcal{H}}(E) = \delta\)}.
	By the definition of Hausdorff dimension, it is enough to show the following two statements:
	\begin{enumerate}
		\item[(1)] For any \(s>\delta\), \(\mathcal{H}^s(E)=0\);
		\item[(2)] For any \(s<\delta\), \(\dim_{\mathcal{H}} (E)\ge s\).
	\end{enumerate}
	
	For (1), we first have
	\begin{equation}\label{eq0317-a}
		\sum_{i=1}^{N_k} (\text{diam } I_{k,i})^s
		= 2^k \cdot 2^{-\frac{ks}{\delta} - \frac{ks}{\ln (k+1)}}
		= 2^{k(1 - \frac{s}{\delta}t) - \frac{ks}{\ln (k+1)}}.
	\end{equation}
	Since \(1 - \frac{s}{\delta} < 0\) by $s>\delta$, it follows that the right-hand side of \eqref{eq0317-a} tends to \(0\) as \(k\to\infty\).
	Thus, \(\mathcal{H}^s(E)=0\) when $s>\delta$.
	
	To show (2), we first define a mass distribution \(\mu\) on \(E\) as follows.
	For each interval \(I_{k,i}\) at the \(k\)-th iteration, set \(\mu(I_{k,i}) := \frac{1}{N_k} = \frac{1}{2^k}\); for any set \(A\subset\mathbb{R}\), if \(A\cap E_k=\emptyset\) for some \(k\in\mathbb{N}\), then we set \(\mu(A)=0\).
	By \cite[Proposition 1.7]{f14}, \(\mu\) can be extended to a measure on \(\mathbb{R}\) with \({\rm{spt}}\ \mu\subset E\) and
	\[
	\mu(E)=\lim_{k\to\infty}\mu(E_k)=1.
	\]
	Thus \(\mu\) is a mass distribution on \(E\). Moreover, for any \(k\in\mathbb{N}\) and any \(i=1,2,\dots,N_{k}\),
	\begin{align}\label{eq1230-1}
		\mu(I_{k,i})=\frac{1}{2^k}\le K(\text{diam } I_{k,i})^s,
	\end{align}
	where \[
	K:=\max_{k\in\mathbb{N}} 2^{-k\left(1-\frac{s}{\delta}\right)+\frac{ks}{\ln (k+1)}},
	\]
	which is finite due to \(s<\delta\).
	
	Let \(U \subset\mathbb{R}\) be any set with \(\rho:=\text{diam}\ U\in(0, 2^{-\frac{1}{\delta}-2}]\).
	Since \(L_k\) is strictly decreasing to \(0\) and \(L_1= 2^{-\frac{1}{\delta}-\frac 1{\ln 2}}>2^{-\frac{1}{\delta}-2}\), it follows that
	\begin{align}\label{eq316-1}
		(0, 2^{-\frac{1}{\delta}-2}]\subset \bigcup_{k\in\mathbb{N}}[L_{k+1}, L_{k}).
	\end{align}
	Furthermore, for any $k\in\mathbb{N}$,
	$$
	\frac{L_{k}}{L_{k+1}}=2^{\frac{1}{\delta}+\frac{k+1}{\ln (k+2)}-\frac{k}{\ln (k+1)}}\leq 2^{\frac{1}{\delta}+\frac{2}{\ln 3}-\frac{1}{\ln 2}}=: c_0.
	$$
	Hence, $[L_{k+1}, L_{k})\subset [\frac{1}{c_0}L_k, L_k)$.
	Combining this with \eqref{eq316-1}, we obtain
	$$
	(0, 2^{-\frac{1}{\delta}-2}]\subset \bigcup_{k\in\mathbb{N}}[L_{k+1}, L_{k})\subset\bigcup_{k\in\mathbb{N}}\left[\frac{1}{c_0}L_k, L_k\right).
	$$
	Consequently, there exists $k_0\in\mathbb{N}$ such that
	$\frac{1}{c_0}L_{k_0}\le\rho<L_{k_0}$.
	If \(U\) intersects with three or more intervals of the \(k_0\)-th iteration with $k_0\ge 2$,
	then \(\rho>L_{k_0}\), which is a contradiction.
	Hence \(U\cap E\) is covered by at most two intervals from the \(k_0\)-th iteration, denoted by \(I_{k_0,i_1},I_{k_0,i_2}\).
	By subadditivity and \eqref{eq1230-1},
	\[
	\mu(U)=\mu(U\cap E)\le\mu(I_{k_0,i_1})+\mu(I_{k_0,i_2})
	= \frac{1}{2^{k_0-1}}
	\leq 2K(L_{k_0})^s
	\leq 2Kc_0^s\rho^s.
	\]
	Therefore, from  Lemma \ref{them1230-1}, we deduce that
	\(\dim_{\mathcal{H}} (E) \ge s\). Thus, (2) holds true.
	This completes the proof of Proposition \ref{them1229-1}.
\end{proof}

\begin{proof}[Proof of Theorem \ref{lm-1226-x}]
	According to the proof of Proposition \ref{them1229-1}, for the given cube $Q_0:=[0,2)^n$, there exists a subset $E\subset Q_0$ such that
	$
	\dim_{\mathcal{H}} (E)=\delta$  and $\mathcal{H}^\delta(E) = 0.
	$
	Define
	\[
	w(x):=
	\begin{cases}
		\ 1, & x\in \mathbb{R}^n\setminus E,\\
		\ 0, & x\in E.
	\end{cases}
	\]
	Since the $\delta$-Hausdorff capacity and $\delta$-Hausdorff measure share the same null sets (see \cite[Lemma 1, p.64]{eg92}), it follows that $\ch_\fz^\delta(E)=0$ and hence $w(x)=1$ for $\mathcal{H}_\infty^\delta$-a.e.\ $x\in\mathbb{R}^n$
	which further implies $w\in\mathcal{A}_{p,\delta}$ for any $p\in [1,\fz)$.
	
	However, for any $\beta\in(0,\delta)$, we have $\mathcal{H}^\beta(E)=\infty$ by the definition of Hausdorff measure.
	Using again that the $\delta$-Hausdorff capacity and $\delta$-Hausdorff measure have the same null sets, we obtain $\mathcal{H}_\infty^\beta(E)>0$. This means that, for the above cube $Q_0$,
	$$\int_{Q_0}w(x)^{-\frac1{p-1}}\,d\ch_\fz^\beta\ge \int_E w(x)^{-\frac1{p-1}}\,d\ch_\fz^\beta=\fz,$$
	and hence $w\notin\mathcal{A}_{p,\beta}$ for any $p\in(1,\fz)$. Moreover, it is easy to know that
	$\cm_{\ch_\fz^\delta}w(x)>0$ on $E$ and, therefore, $w\notin\mathcal{A}_{1,\beta}$.
	This completes the proof of Theorem \ref{lm-1226-x}.
\end{proof}

\medskip



\medskip

\noindent Yangzhi Zhang and Ciqiang Zhuo (Corresponding author)

\smallskip

\noindent MOE-LCSM, School of Mathematics and Statistics,
Hunan Normal University,
Changsha, Hunan 410081, P. R. China

\smallskip

\noindent{\it E-mails:}
\texttt{yzzhang@hunnu.edu.cn}

\noindent\phantom{ {\it E-mails}}
\texttt{cqzhuo87@hunnu.edu.cn}

\end{document}